\renewcommand{\top}{{T}}
\newcolumntype{d}[1]{D{.}{.}{#1}}
\newcommand{\rrvert}{\vert}
\newcommand{\llvert}{\vert}
\newcommand{\rpspace}{\mathscr{C}}
\newcommand{\implies}{\Longrightarrow}
\newcommand{\eqref}[1]{(\ref{#1})}
\newtheorem{theorem}{Theorem}
\newtheorem{proposition}[theorem]{Proposition}
\newtheorem{lemma}[theorem]{Lemma}
\newcommand{\FullMeasureSet}{\mathbb{D}}
\newcommand{\R}{\mathbb{R}}
\renewcommand{\P}{\mathbb{P}}
\begin{document}
\begin{frontmatter}

%\dochead{}
\title{Pathwise stability of likelihood estimators for diffusions via
rough paths}
\runtitle{Pathwise stability of likelihood estimators}

\begin{aug}
% Corresponding author: Peter Friz - friz@math.tu-berlin.de% Updated by
%VTEXPTS2LaTeX.exe, 09.10.2015 15:37
\author[A]{\fnms{Joscha}~\snm{Diehl}\thanksref{M1,M2,T1}\ead[label=e0]{joscha.diehl@gmail.com}},
\author[B]{\fnms{Peter}~\snm{Friz}\corref{}\thanksref{M1,M3,T2}\ead[label=e1]{friz@math.tu-berlin.de}\ead[label=e11]{friz@wias-berlin.de}}
\and
\author[C]{\fnms{Hilmar}~\snm{Mai}\thanksref{M4,T2}\ead[label=e2]{hilmar.mai@ensae.fr}}
\runauthor{J. Diehl, P. Friz and H. Mai}
\affiliation{TU Berlin\thanksmark{M1},
University of California, San Diego\thanksmark{M2},
WIAS Berlin\thanksmark{M3}\\ and
ENSAE ParisTech\thanksmark{M4}}
%\dedicated{}
\address[A]{J. Diehl\\
TU Berlin\\
Strasse des 17. Juni 136\\
10623 Berlin\\
Germany\\
and\\
Department of Mathematics---MC 0112\\
University of California, San Diego\\
9500 Gilman Drive\\
La Jolla, California 92093\\
USA\\
\printead{e0}}
\address[B]{P. Friz\\
TU Berlin\\
Strasse des 17. Juni 136\\
10623 Berlin\\
Germany\\
and\\
Weierstrass Institute for\\
\quad Applied Analysis and Stochastics\\
Mohrenstrasse 39\\
10117 Berlin\\
Germany\\
\printead{e1}\\
\phantom{E-mail: }\printead*{e11}}
\address[C]{H. Mai\\
CREST ENSAE ParisTech\\
3, av. Pierre Larousse\\
92240 Malakoff\\
France\\
\printead{e2}}
\end{aug}
\thankstext{T1}{Supported by DFG, SPP 1324.}
\thankstext{T2}{Supported in part by European Research Council under
the European Union's Seventh Framework
Programme (FP7/2007-2013)/ERC Grant agreement nr. 258237.}

% HISTORY:
%
\received{\smonth{3} \syear{2015}}% Updated by VTEXPTS2LaTeX.exe,
%09.10.2015 15:37
%\revised{\smonth{} \syear{}}

% ABSTRACT
%
\begin{abstract}
We consider the classical estimation problem of an unknown drift
parameter within classes of nondegenerate diffusion processes.
Using rough path theory (in the sense of T. Lyons), we analyze
the Maximum Likelihood Estimator (MLE) with regard to its pathwise
stability properties as well as robustness
toward misspecification in volatility and even the very nature of the noise.
Two numerical examples demonstrate the practical relevance of our results.
\end{abstract}

% KEYWORDS
% Pirmas kwd is didziosios raides
%
\begin{keyword}[class=AMS]
%\kwd[Primary ]{}
\kwd{62M05}
\kwd{62F99}
\kwd[; secondary ]{60H99}
\end{keyword}
\begin{keyword}
\kwd{Maximum likelihood estimation}
\kwd{robust estimation}
\kwd{rough paths analysis}
\end{keyword}
\end{frontmatter}

\setcounter{footnote}{2}

%s1 #&#
\section{Introduction}

Let $W$ be $d$-dimensional Wiener process and $A\in\mathbb{V}$, some
fixed finite-dimensional vector space.\vspace*{1pt}
Consider sufficiently regular $h:
\mathbb{R}^{d}\rightarrow L(\mathbb{V},\mathbb{R}^{d})$ and $\Sigma
:\mathbb{R}%
^{d}\rightarrow L ( \mathbb{R}^{d},\mathbb{R}^{d} ) $ so
that\footnote{$L(V,W)$ denote the space of linear maps between two
finite-dimensional vector spaces and may be identified with a space of
matrices.}
%
%e1.1 #&#
\begin{equation}
\label{sde} dX_t = h ( X_t ) A \,dt+ \Sigma (
X_t ) \,dW_{t}
\end{equation}
has a unique solution, started from $X_{0}=x_{0}$. Throughout we shall assume
nondegeneracy of the matrix-valued diffusion coefficient $\Sigma$.
The important example of
multidimensional Ornstein--Uhlenbeck dynamics, for instance, falls in the
class of diffusions considered here.
We are interested in estimating the drift parameter~$A$, given some
observation sample path $ \{ X_{t} (
\omega ) =\omega_{t}:t\in [ 0,T ]  \} $. To
this end, we consider the classical Maximum Likelihood Estimator (MLE),
%More precisely, we are looking for a Maximum Likelihood Estimator
%(MLE) of the
of the form
\[
\hat{A}_T = \hat{A}_T ( \omega ) = \hat{A}_T
\bigl(X(\omega )\bigr) \in\mathbb{V}
\]
relative to the reference measure given by the law of the drift-free
process. Note that $\hat{A}_T$ is
a functional on pathspace $C([0,T],\mathbb{R})$: for every
(observation) sample path $X(\omega) = \omega$
one has a corresponding estimate $\hat{A}_T(X(\omega))$. Let us also
recall that these MLEs are based
on the Girsanov density of the pathspace measures, with- versus
without-drift, respectively, see, for example, the standard
text books of Kutoyants \cite{Kutoyants2004} or Liptser--Shiryaev
\cite{Liptser2001}.
It will be instructive to consider the simplest possible example with
its fully explicit solution.
%
%ex1 #&#
\begin{example}[(Scalar Ornstein--Uhlenbeck process)]\label{Ex1} In (\ref{sde}), take $d=1$, $h(x)
= x$,
$\Sigma\equiv\sigma> 0$ and scalar drift parameter $A$. The MLE for
this parameter is then given explicitly by
%
%e1.2 #&#
\begin{equation}
\hat A_T (X) = \frac{X_T^2 - x_0^2 - \sigma^2 T}{2 \int_0^T X_t^2   \,dt}.
\end{equation}
\end{example}
Despite its simplicity, the above example exhibits a few interesting
properties: First, it is not well defined for every possible path $X
\in C([0,T],\mathbb{R})$, and indeed $X \equiv0$ leaves us with an
ill-defined division by zero. Second, provided we stay away from the
zero-path, we have pathwise stability in the sense that two observation
$X$ and $\tilde{X}$ which are uniformly close on $[0,T]$ give rise to
close estimations $\hat{A}_T (X) \approx\hat{A}_T(\tilde{X})$. (In
other words, the functional $\hat{A}_T$ is continuous on
$C([0,T],\mathbb{R})- \{ 0 \}$, with respect to the uniform topology.)
At last, the estimator depends continuously on the parameter $\sigma$,
despite the fact that pathspace measures associated to different values
of $\sigma$ are actually mutually singular.\footnote{The laws of
$\sigma W$ and $\tilde\sigma W$ are mutually singular when $\sigma\ne
\tilde\sigma$; just compute the quadratic variation.}

In order to understand such stability considerations in greater
generality, we now review the MLE construction for a general diffusion
as given in \eqref{sde}.
To this end, recall that by Girsanov's theorem, under the
standing assumption that $C:=\Sigma\Sigma^{T}$ is nondegenerate, the
corresponding measures on pathspace, say $\mathbb{P}^{A,\Sigma}$ and
$\mathbb{P}^{0} := \mathbb{P}^{0,\Sigma}$, are absolutely
continuous so that the MLE method is applicable. Standard computations,
partially reviewed below, show that one has
\[
I_{T}\hat{A}_{T}=S_{T},
\]
where
\[
%\label{equ:STequalsIto}
S_{T}=\int_{0}^{T}h(X_s)^{\top}C^{-1}(X_{s})
\,dX_{s} \in\mathbb{V} %%
^{\ast}
\]
and
\[
I_{T}=\int_{0}^{T}h(X_s)^{\top}C^{-1}(X_{s})h(X_s)
\,ds\in L \bigl(\mathbb{V},\mathbb{V}^{\ast} \bigr),
\]
where the first integral above (against $dX$) is understood in It\^o
sense. Of course, degeneracy may be a problem, for instance, when
$h\equiv0$.
One the other hand, for ``reasonable'' nondegenerate $h$ [such as
$h (
x ) =x$ in the Ornstein--Uhlenbeck model case] one can expect
a.s. invertibility of $%
I_{T}$ and thus an a.s. well-defined estimator
%
%e1.3 #&#
\begin{equation}
\hat{A}_{T} ( \omega ) =I_{T}^{-1}S_{T}
\in\mathbb{V}. \label{MLEclassical}
\end{equation}
Let us emphasize that $S_{T}$ involves a
stochastic (here: It\^{o}) integral so that $S_{T}$ is also only
defined up
to null-sets. At this stage, one has (at best) a measurable map $\hat
{A}%
_{T}:C (  [ 0,T ] ,\mathbb{R}^{d} ) \rightarrow
\mathbb{V}$
with the usual null-set ambiguity.\footnote{%
The situation is reminiscent of SDE theory: the It\^{o}-map is also a
measurable map on pathspace, in general only defined up to null-sets.}
The following questions then arise rather naturally:
\begin{longlist}[(Q2)]
\item[(Q1)] Under what conditions on $h$ (and $\omega$) is
$I_{T}=I_{T} (
X ( \omega )  ) $ actually invertible? [For $\mathbb
{P}^{0}$-a.e.
$X ( \omega )=\omega$, say, or provide a robust pathwise
condition.]

\item[(Q2)] Assuming suitably invertibility of $I_{T}$, so that the estimator
$\hat{A}_{T}$ is well defined, do we have ``robustness'' of\vspace*{1pt} the estimate
problem in the following sense: if $X \approx\tilde{X}$ (e.g., in the
sense that $\sup_{t\in [ 0,T ] }\llvert  X_{t}-\tilde{%
X}_{t}\rrvert \ll1$ or perhaps a more complicated metric) is it true
that
\[
\hat{A}_{T} ( X ) \approx\hat{A}_{T}( \tilde{X}%
)
?
\]
In other words, is the functional $\hat{A}_{T}$ continuous in some topology?

\item[(Q3)] Write $\hat{A}_{T}^{\sigma}$ to indicate the MLE under volatility
specification $\Sigma=\sigma I$. Assume we are not entirely certain about
the value of $\sigma$. Is it true---a rather sensible request from a user's
perspective---that
\[
\sigma\approx\tilde{\sigma}\implies\hat{A}_{T}^{\sigma}\approx
\hat{A}%
_{T}^{\tilde{\sigma}} ?
\]
\end{longlist}

We emphasize that (Q3) is a difficult question, last not least because the
respective pathspace measures are singular whenever $\sigma\neq\tilde
{%
\sigma}$. Hence, it is not even clear if one is allowed to speak
``simultaneously'' of $\hat{A}_{T}^{\sigma}$ for all $\sigma
$.\footnote{%
The situation is reminiscent of stochastic flow theory: for each fixed
starting point, SDE solution may be (well-) defined (up to null-sets), but
it is far from clear---and not true in general in infinite dimension---that
one can define solutions for all starting points on a common set of
full measure. The financial theory of \textit{uncertain volatility} (see
\cite{Avellaneda1995} and \cite{Lyons1995}) also poses related
problems.} The situation becomes even worse if one considers all possible
volatility specifications in a class like
\[
\bigl\{ \Sigma: c^{-1}I\leq\Sigma\Sigma^{T}\leq cI \bigr\}
.
\]
Indeed, this space is infinite-dimensional, leaving no hope to ``fix'' things
with Kolmogorov-type criteria. On the other hand, explicit computations
(e.g., in the Ornstein--Uhlenbeck case, Example \ref{Ex1} and
Section~\ref{sec:counterexamples}) show that $\hat{A}$ is
extremely well behaved in $\sigma$. Hence, we can certainly hope for
some sort of robustness of the MLE with respect to the volatility
specification.

The last question we would like to investigate is about
misspecification of the noise $W$. The assumption of independent
increments of $W$ is a strong limitation in applications and a
nontrivial dependence structure in time appears in many real data examples.
\begin{longlist}[(Q4)]
\item[(Q4)] Suppose that the model is misspecified in the sense that
\eqref{sde} is in fact driven by a fractional Brownian motion $W^H$
with Hurst index $H$. Is the MLE $\hat A_T$ robust in some sense (e.g.,
when $H \approx1/2$) with respect to this change of the model?
\end{longlist}

Our main theorem in Section~\ref{sec:main} provides reasonable answers
to question (Q1) to (Q3) based on T. Lyons' rough path theory \cite
{LQ02,LCT,FV,FH}, a short review of which will be given in Section~\ref
{rpreview} below. It is worth emphasizing that the rough path ideas are
pivotal to the pathwise robustness results obtained here: in
Section~\ref{sec:counterexamples}, we give an explicit example
illustrating the failure of robustness if one uses the usual uniform
topology. Question (Q4) will be addressed in Section~\ref
{sec:misspecification}.

In Section~\ref{sec:numerical}, we present two numerical examples
demonstrating the practical value of our theoretical results.
The first one concerns an Ornstein--Uhlenbeck process driven by
physical Brownian motion in a magnetic field.
As was recently demonstrated in \cite{FGL}, physical Brownian motion
in a magnetic field does \emph{not} converge---in the zero mass limit---to standard Brownian motion
on the level of rough paths; a correction term appears.
Nonetheless, our main theorem tells us how to appropriately correct the
estimator for the
OU process driven by a standard Brownian motion in order to still get
reasonable results.
The second example concerns the Ornstein--Uhlenbeck process driven by
fractional Brownian motion $W^H$
with Hurst parameter $H$.
For $H < 1/2$ naively applying the classical estimator is not
well-posed, since the It\^o integrals
are not well defined.
There exists, though, a canonical rough path lift for $H > 1/3$ and
plugging this
into the estimator of Theorem \ref{thm:mainTheorem} leads to
surprisingly good results even for $H\neq1/2$.
The theoretical background for this example is presented in
Section~\ref{sec:misspecification};
most importantly, the fact that the rough path lift is continuous in $H$.

The interplay of statistics and rough paths is very recent. The first
and (to our knowledge) only paper is \cite{PL} where the authors
consider general rough differential equations driven by random rough
paths and propose parametric estimation of the coefficients
based on Lyons' notion of expected signature. That said, the present
paper constitutes the first use of rough path analysis toward
robustness questions related
to classical statistical estimation problems for diffusion processes.

%s2 #&#
\section{A first step: Stratonovich estimators}
\label{sec:stratonovich}
Let us recall a few basic facts about convergence of discrete
approximations of stochastic integrals.
This is a central issue when applying the maximum likelihood estimators
in the context of discrete observations and will be of importance for
our numerical examples in Section~\ref{sec:numerical}.

Let $X$ be a (possibly multi-dimensional) continuous semimartingale.
Then for regular enough functions $f$:
\begin{longlist}[(iii)]
\item[(i)]
the left-point Riemann sums converge to the It\^o integral in probability
\begin{eqnarray*}
&&\sum_{[u,v] \in\mathcal{P}^n } f(X_u) [ X_v -
X_u ] \to_{n\to\infty} \int f(X_r) \,dX_r,
\end{eqnarray*}
for any sequence $\mathcal{P}^n$ of partitions with mesh-size going to $0$;

\item[(ii)]
the trapezoidal Riemann sums converge to the Stratonovich integral in
probability
\begin{eqnarray*}
\sum_{[u,v] \in\mathcal{P}^n } \frac{1}{2} \bigl[
f(X_u) + f(X_v) \bigr] [ X_v -
X_u ] \to_{n\to\infty} \int f(X_r) \circ
dX_r,
\end{eqnarray*}
for any sequence $\mathcal{P}^n$ of partitions with mesh-size going to $0$;

\item[(iii)]
for any reasonable\footnote{For example, piecewise linear, mollified, etc.}
smooth approximations $X^n \to_{n \to\infty} X$ the corresponding
classical Riemann--Stieltjes integrals converge to the Stratonovich
integral in probability
\begin{eqnarray*}
\int f\bigl(X^n_s\bigr) \,dX^n_s
= \int f\bigl(X^n_s\bigr) \dot X^n_s
\,ds \to_{n\to\infty} \int f(X_r) \circ \,dX_r.
\end{eqnarray*}
\end{longlist}
The first point illustrates how a MLE is usually used in practice, for
discrete time observations:
Since the process $X$ is only known at a finite number of time points
(discrete observations), the stochastic integrals are usually
approximated by left-point Riemann sums.
This is in fact a quite unstable procedure, as will be illustrated in
Section~\ref{sec:misspecification}.

On the other hand, looking at (iii), it is reasonable to
expect, that any positive answer to (Q2) will
start out with the Stratonovich formulation of the MLE:
%
%e2.1 #&#
\begin{equation}
\label{eq:forMLE} I_{T}(X) \hat{A}_{T}(X) =S^{\mathrm{Strat}}_{T}(X),
\end{equation}
where
\begin{eqnarray*}
S^{\mathrm{Strat}}_{T}(X)_{i,j} &=& %\int_{0}^{T}h(X_s)^{\top}C^{-1}(X_{s}) \,dX_{s}
\int
_{0}^{T} h_i(X_{s})^T
C^{-1}_{j \cdot}(X_{s}) \circ \,dX_{s}
\\
&&{}-
\frac{1}{2} \int_0^T \operatorname{Tr}
\bigl[ D\bigl( h_i C^{-1}_{j \cdot
}\bigr)
(X_s) \Sigma(X_s) \Sigma(X_s)^T
\bigr] \,ds,
\\
I_{T}(X)&=&\int_{0}^{T}h(X_s)^{\top}C^{-1}(X_{s})h(X_s)
\,ds\in L \bigl( \mathbb{V},\mathbb{V}^{\ast} \bigr).
\end{eqnarray*}
There is, at first, only a notational difference between $S^{\mathrm{Strat}}$
and $S$,
since we have just rewritten the It\^o integral as a Stratonovich one.
Taking a hint from point (iii) above though, we \emph
{define} from now on $S^{\mathrm{Strat}}_T(X)$ for smooth paths $X$
(a null-set under the diffusion measure) with the Stratonovich
integrals replaced by Riemann--Stieltjes integrals.
Before stating our first stability result which justifies this
definition, we give the following well-posedness result on the estimator.
We assume this result to be folklore in the statistical community, but
were unable to find a relevant reference.
The proof will be given in Section~\ref{sec:proofs}.
%
%pr2 #&#
\begin{proposition}
\label{prop:wellposednessMLE}
The MLE for $A$ in equation \eqref{sde} is characterized by \eqref{eq:forMLE}.
Moreover, if we define
\[
R_{h}:= \bigl\{ X\in C \bigl( [ 0,T ] ,\mathbb {R}^{d}
\bigr) : \forall M \in\mathbb{V}, M \neq0, \exists t \in[0,T]\mbox{ s.t. } h (
X_{t} ) M \neq0 \bigr\},
\]
and assume that
\[
\mathbb{P}^{0,\Sigma} ( R_{h} ) =1,
\]
then $I_{T}=I_{T} ( X  ) $ is $\mathbb{P}^{0,\Sigma}$-almost surely invertible so that $A_{T}= A_{T} ( X  ) :=
I_{T}^{-1}S_{T}(X)$ is $\mathbb{P}^{0,\Sigma}$-almost surely
well defined.
\end{proposition}

We then have the following first stability result. % that is essential
%in situations
%where $X$ is observed only at discrete time points.
%
%pr3 #&#
\begin{proposition}
\label{prop:stratonovichRobust}
Assume that $\P^{0,\Sigma}(R_h) = 1$,
and let $X^n$ be piecewise linear approximations to $X$ such that
$R^h$ has full measure under the image measure of $X^n$ for all $n$.
Then in probability
\begin{eqnarray*}
\hat A^{\mathrm{Strat}}_T\bigl( X^n \bigr) :=
I_{T}^{-1}\bigl(X^n\bigr) S^{\mathrm{Strat}}_{T}
\bigl(X^n\bigr) \to_{n\to
\infty} \hat A_T(X).
\end{eqnarray*}
\end{proposition}

\begin{pf}
The claimed stability (in probability) of Stratonovich integrals, which
can be found, for example, in Section~6.6 in \cite{bibikedaWatanabe}
(see \cite{FH}, Section~9.2, for a modern proof), yields
$S^{\mathrm{Strat}}_T(X^n) \to S^{\mathrm{Strat}}_T(X)$ as $n\to\infty$.
Moreover $I_T^{-1}$ is continuous in supremum norm in $X$ on $R_h$, and
hence the statement follows.
\end{pf}

Note that the preceding result only concerns convergence in probability;
it therefore does \emph{not} provide a good answer to (Q2).
To wit, for the Stratonovich estimator $\hat A_T^{\mathrm{Strat}}$ it is in general
\emph{not} true that paths that are uniformly close in supremum norm,
that the resulting estimates will be close. We give an explicit
(deterministic) counterexample in Section~\ref{sec:counterexamples}.
A stochastic counterexample will be given in Section~\ref{sec:numerical},
in the setting of a physical Brownian motion in a magnetic field.

In order to fix this problem (and in order to answer the other questions),
we will adopt a rough path perspective in the next section.
To this end, we now give some recalls on rough path theory.

%s3 #&#
\section{Brief review of rough paths}\label{rpreview}

In this section, we introduce some basic notions from Lyons' rough
paths theory. Our notation here follows Friz--Hairer \cite{FH}, which
is also a
source of much more on this material, together with the standard
references \cite{LQ02,LCT,FV}.

We start by giving a definition of H\"older continuous rough paths that
is suitable for our purpose. Let $X: [0,T] \to\mathbb{R}^d$ be a
\emph{smooth} path and define the second-order iterated integrals
$\mathbb{X}: [0,T]^2 \to\mathbb{R}^d \otimes\mathbb{R}^d$ of $X$ by
\[
\mathbb{X} _{s,t}:=\int_{s}^{t}X_{s,r}
\otimes dX_{r},
\]
where $X_{s,r}=X_{r}-X_{s}$ are the increments of $X$.
Then the pair $(X,\mathbb{X})$ has the analytic property
\[
( \mathrm{ANA} ) _{\alpha}:\cases{ \llvert X_{s,t}\rrvert \lesssim\llvert
t-s\rrvert ^{\alpha},
\vspace*{3pt}\cr
\llvert \mathbb{X}_{s,t}\rrvert \lesssim
\llvert t-s\rrvert ^{2\alpha}}
\]
for any $\alpha\leq1$ and satisfies the algebraic relation
\begin{eqnarray*}
(\mathrm{ALG}) :\mathbb{X}_{s,t}+X_{s,t}\otimes X_{t,u}+
\mathbb {X}_{t,u}&=&\mathbb{X}_{s,u},
\\
\bigl(\mathrm{ALG}^{\prime}\bigr) :2\operatorname{Sym} ( \mathbb{X}_{s,t}
) &=&X_{s,t}\otimes X_{s,t},
\end{eqnarray*}
for $s,t,u \in[0,T]$.
More generally speaking, these two conditions are used to define a
\textit{rough path} in $\mathbb{R}^d$.
%
%de4 #&#
\begin{definition}
Fix $\alpha\in(1/3,1/2]$. Any $\mathbf{X}= ( X,\mathbb
{X} ) $ for
which $ ( \mathrm{ANA} ) _{\alpha}+(\mathrm{ALG})$ holds is called (weak
$\alpha$-H\"{o}lder) {rough path}. If also $ ( \mathrm{ALG}^{\prime} ) $
is satisfied call it {geometric}. The space of $\alpha$-H\"older rough
paths and its subset of geometric rough paths are denoted by $%
\rpspace^{\alpha} (  [ 0,T ] ,\mathbb{R}^{d} )
$ and $\rpspace_{g}^{\alpha} (  [ 0,T ] ,\mathbb
{R}^{d} )$, respectively.
\end{definition}
Rough paths arise naturally as sample paths of stochastic processes.
The basic example is a $d$-dimensional Brownian motion $B$ enhanced
with its iterated integrals
\[
\mathbb{B}_{s,t} := \int_s^t
B_{s,r} \otimes dB_r \in\mathbb{R}^{d
\times d},
\]
where the integral on the right-hand side can be understood in It\^o or
Stratonovich sense leading to It\^o or Stratonovich enhanced Brownian
motion, respectively. Then with probability one $\mathbf{B} =
(B,\mathbb{B}) \in\rpspace^{\alpha} (  [ 0,T ]
,\mathbb{R}^{d} )$ for any $\alpha\in(1/3,1/2)$ and \mbox{$T >0$}. We
also say that we can lift $B$ to a rough path $\mathbf{B}$ by adding
the second-order terms $\mathbb{B}$. A similar rough paths lift is
given in our main result for the solution of \eqref{sde}.

To investigate stability questions for the parameter estimation problem
in a pathwise sense, we need suitable metric on $\rpspace^{\alpha
} (  [ 0,T ] ,\mathbb{R}^{d} ) $. It turns out
that an adequate metric on $\rpspace^{\alpha} (  [
0,T ] ,\mathbb{R}^{d} )$ can be defined from $ (
\mathrm{ANA} ) _{\alpha}$ as follows.
%
%de5 #&#
\begin{definition}
\label{def:holderSpace}
For $\mathbf{X,Y} \in\rpspace^{\alpha} (  [ 0,T ]
,\mathbb{R}^{d} )$ the $\alpha$-H\"older rough path metric is
given by
\[
\rho_\alpha(\mathbf{X,Y}) := \sup_{s \neq t \in[0,T]}
\frac
{\llvert X_{s,t} - Y_{s,t}\rrvert }{\llvert t-s\rrvert ^\alpha} + \sup_{s \neq t \in[0,T]} \frac
{\llvert \mathbb{X}_{s,t} - \mathbb{Y}_{s,t}\rrvert }{\llvert t-s\rrvert ^{2\alpha}}.
\]
\end{definition}
%
%re6 #&#
\begin{remark}
In the original formulation of rough paths theory in \cite{L98},
distance was measured in $p$-variation norm instead of the $\alpha$-H\"
older norm used here.
The results in this work can be rephrased without difficulty in a
$p$-variation setting.
This applies in particular to the continuity of the map $\hat{\mathbf{A}}_{T}$ in Theorem \ref{thm:mainTheorem}(ii) and (iii) below.
\end{remark}
We conclude this section with rough integrals and its relation to
stochastic integration. Let $\mathcal{P}$ be a partition of $[0,T]$
and denote by $|\mathcal{P}|$ the length of its largest element. For
$\mathbf{X} = (X,\mathbb{X}) \in\rpspace^{\alpha} (  [
0,T ] ,\mathbb{R}^{d} )$ and $\alpha> 1/3$ we aim at
integrating $F(X)$ for $F \in\mathcal{C}_b^2 (\mathbb{R}^d,\mathcal
{L}(\mathbb{R}^d,\mathbb{R}^m))$ against $\mathbf{X}$. It is well
known that classical Young integration is possible for expressions of
the form
\[
\int_0^T F(X_t) \,dX_t
\]
only if $X \in\mathcal{C}^\alpha$ for $\alpha> 1/2$. This excludes,
for example, paths of Brownian motion which are of order $\alpha< 1/2$.
This barrier was overcome by rough paths theory by taking into account
``second order'' terms. Indeed, one can show that the limit in
\[
\int_0^T F(X_s) \,d
\mathbf{X}_s := \lim_{|\mathcal{P}| \to0} \sum
_{(s,t) \in\mathcal{P}} F(X_s) X_{s,t} + D
F(X_s) \mathbb{X}_{s,t}
\]
exists and is called a (Lyons') rough integral \cite{L98}.
Most importantly for us, rough integrals depend continuously in rough
path metric on $\mathbf{X}$
and
by taking $\mathbf{X} = \mathbf{B}$ to be (Stratonovich) enhanced
Brownian motion one recovers with probability one the Stratonovich integral.

We shall need the following standard result; see, for example,
Friz--Victoir \cite{FV}, Section~13, or Friz--Hairer \cite{FH}, Section~10.
%
%pr7 #&#
\begin{proposition}\label{prop:lift}
Fix $\alpha\in ( 1/3,1/2 ) $. Then, $\mathbb{P}^{0,\Sigma
}$-almost
surely, $X ( \omega ) $ lifts to a (random) geometric
$\alpha$-H%
\"{o}lder rough path, that is, a random element in the rough path space
$%
\rpspace_{g}^{\alpha} (  [ 0,T ] ,\mathbb
{R}^{d} )$ (as reviewed in the next section),
via the (existing) limit in probability
\[
\mathbf{X} ( \omega ) := \bigl( X ( \omega ) ,\mathbb{X}%
 ( \omega )
\bigr) :=\lim_{n} \biggl( X^{n},\int
X^{n}\otimes dX^{n} \biggr),
\]
where $X^{n}$ denotes dyadic piecewise linear approximations to $X$.
\end{proposition}

%s4 #&#
\section{Main result} \label{sec:main}

We are now ready to formulate our main result.
By constructing an estimator on rough path space, we resolve the
pathwise stability problem
that is inherent to the Stratonovich estimator (compare Proposition
\ref{prop:stratonovichRobust} and Section~\ref{sec:counterexamples}).
%We define the rough MLE that solves the stability problems of the
%classical MLE for $A$ as demonstrated in Section
%\ref{sec:counterexamples}.
%We achieve a stable estimator even in a pathwise setting by moving
%from the Stratonovich formulation discussed in Section
%\ref{sec:stratonovich} to a rough paths setting.
%
%th8 #&#
\begin{theorem}
\label{thm:mainTheorem}
Assume that $\mathbb{P}^{0,\Sigma} ( R_{h} ) = 1$, so that
the MLE $\hat A$ is well defined by Proposition \ref{prop:wellposednessMLE}.

\begin{longlist}[(iii)]
\item[(i)]
Define $\mathbb{D} \subset\rpspace_{g}^{\alpha} (  [ 0,T%
 ] ,\mathbb{R}^{d} ) $ by
%\footnote{The rough path spaces $\rpspace_g^\alpha$ are recalled in
%Section \ref{rpreview}.}
%
\[
\FullMeasureSet= \bigl\{ ( X,\mathbb{X} ) \in\rpspace _{g}^{\alpha
}:X
\in R_{h} \bigr\}.
\]
Then for every fixed, nondegenerate volatility
function $\Sigma$,
\[
\mathbb{P}^{0,\Sigma} \bigl[ \mathbf{X} ( \omega ) \in \FullMeasureSet
\bigr] =1.
\]

\item[(ii)]
There exists a deterministic, continuous (with respect to $\alpha$-H%
\"{o}lder rough path metric; see Definition \ref{def:holderSpace}) map
\[
\hat{\mathbf{A}}_{T}:\cases{ \FullMeasureSet \rightarrow \mathbb{V},
\vspace*{3pt}\cr
\mathbf{X} \mapsto \hat{\mathbf{A}}_{T} ( \mathbf{X} )}
\]
so that, for every fixed, nondegenerate volatility function $\Sigma$,
%
%e4.1 #&#
\begin{equation}
\label{eq:identity} \mathbb{P}^{0,\Sigma} \bigl[ \hat{\mathbf{A}}_{T}
\bigl( \mathbf {X} ( \omega ) \bigr) =\hat A_{T}(\omega) \bigr] = 1.
\end{equation}
In fact, $\hat{\mathbf{A}}_{T}$ is explicitly given, for $( X,
\mathbb{X} ) \in\FullMeasureSet\subset\rpspace^\alpha_g$, by
\begin{eqnarray*}
\hat{\mathbf{A}}( X, \mathbb{X} ) := \mathbf{I}_{T}^{-1}( X
) \mathbf{S}_{T}( X, \mathbb{X} ),
\end{eqnarray*}
where
%\TODO{check $S_T$}
%
\begin{eqnarray*}
\mathbf{I}_{T}( X ) &:=& \int_{0}^{T}h
( X_{s} )^T C^{-1}(X_{s})
h(X_{s}) \,ds,
\\
\mathbf{S}_{T}( X, \mathbb{X} )_{i,j} &:=& \int
_{0}^{T} h_i(X_{s})^T
C^{-1}_{j \cdot}(X_{s}) \,d\mathbf{X}_{s}
\\
&&{}-
\frac{1}{2} \int_0^T \operatorname{Tr}
\bigl[ D\bigl( h_i C^{-1}_{j \cdot
}\bigr)
(X_s) \Sigma(X_s) \Sigma(X_s)^T
\bigr] \,ds
\end{eqnarray*}
and the $d\mathbf{X}$ integral is understood as a (deterministic)
rough integration against $\mathbf{X} = ( X, \mathbb{X} )$.

\item[(iii)]
The map $\hat{\mathbf{A}}_{T}$ is also continuous with respect to the
volatility specification. Indeed, fix $c>0$ and set
\[
\Xi:= \bigl\{ \Sigma\in C^2_b : c^{-1}I\leq
\Sigma\Sigma^{T}\leq cI \bigr\} .
\]
Then $\hat{\mathbf{A}}_{T}$ viewed as map from $\FullMeasureSet
\times\Xi
\rightarrow\mathbb{R}^{d}$ is continuous.
\end{longlist}
\end{theorem}

%ex9 #&#
\begin{example}
\label{rem:OU}
The case of the $d$-dimensional Ornstein--Uhlenbeck process
\begin{eqnarray*}
dX_t = A f(X_t) \,dt + \Sigma(X_t)
\,dW_t,
\end{eqnarray*}
with $A \in L( \R^d, \R^d)$, $f: \R^d \to\R^d$ is covered by our
setting by taking
$\mathbb{V} = L(\R^d,\R^d)$ and $h = I \otimes f$, in coordinates
\[
\bigl(h_{i}^{k,j}\bigr)= \bigl( f^{j}
\delta_{i}^{k} \bigr),
\]
so that (with summation over up-down indices)
\[
h_{i}^{k,j} ( x ) A_{j}^{i}=A_{j}^{k}
f^{j} ( x ).
\]

In this case, the nondegeneracy condition in point (i) is, for example,
satisfied if
the set of critical points of $f$ has no accumulation points
[i.e., on every bounded set, there is only a finite set of points at which
$\det Df ( x  ) =0$], which can be seen by an application
of the (functional) law of the iterated logarithm for diffusions
(Strassen's law), for example, Proposition 4.1 in \cite{bibCaramellino1998}.
\end{example}

%re10 #&#
\begin{remark}
Note that Proposition \ref{prop:stratonovichRobust}
can be regarded as a corollary of Theorem \ref{thm:mainTheorem} and
Proposition \ref{prop:lift}.
\end{remark}

%re11 #&#
\begin{remark}
The continuity statements in (ii) and (iii) also hold with respect to
$p$-variation metric, $p \in(2,3)$. This and other rough path metrics
are discussed in Section~\ref{rpreview}.
\end{remark}

%re12 #&#
\begin{remark}
By \eqref{eq:identity} the well-known asymptotic properties of the
maximum likelihood estimator $\hat A_T$ like
consistency and asymptotic normality (see, e.g., \cite{Kutoyants2004})
also hold for $\hat{\mathbf{A}}_{T}$.
\end{remark}

%re13 #&#
\begin{remark}
We briefly discuss in what sense Theorem \ref{thm:mainTheorem}
provides answers to (Q1)--(Q3) above:
\begin{longlist}[(Q2)]
\item[(Q1)] Proposition \ref{prop:wellposednessMLE} gives a pathwise
condition for existence of the MLE in terms of the drift coefficient $h$.
%It follows from (iii) that this condition is also sufficient in the
%rough paths setting such that if $h$ has no accumulation points also
%the rough paths estimator $\hat{\mathbf{A}}_{T}$ is well-defined.

\item[(Q2)] The discussion in Section~\ref{sec:counterexamples} shows
that the classical MLE violates the pathwise stability property that
(Q2) asks for.
Theorem \ref{thm:mainTheorem} shows that by considering the signal $X$
as a rough path we can construct a continuous estimator $\hat{\mathbf
{A}}_{T}$ that overcomes this difficulty.

\item[(Q3)] The question of stability in the volatility coefficient
$\sigma$ can also be solved by moving to a rough path space. Indeed,
Theorem \ref{thm:mainTheorem}(iii) shows that $\hat{\mathbf
{A}}_{T}^\sigma$ is continuous with respect to the observations and
the volatility coefficient. Here, the pathwise approach is crucial,
since in the classical setting it is not even clear how to define the
estimator as a mapping in both variables whereas in the rough paths
approach this is an obvious consequence.
\end{longlist}
\end{remark}

%re14 #&#
\begin{remark}
While our answer to (Q2) above is best possible, in the sense that one
cannot hope for pathwise stability without
introducing rough paths (see the explicit counterexample in
Section~\ref{sec:counterexamples}),
it leaves the user with the
question of how to exactly understand discrete or continuous data as a
rough path.

In essence, this amounts to measuring the L\'evy area associated to an
observed path. In this direction, there are in fact cases where
the measurement of the area is feasible within the physical system
under observation;
%(see for instance \cite{FGL}, where the stochastic area to the
%trajectory of a Brownian particle with electric charge is linked to
%the presence of a magnetic field).
see \cite{bibbailleulDiehl}.
\end{remark}

%s5 #&#
\section{Proof of the main result}\label{sec:proofs}

%In this section we prove part (i) of the main theorem.
%In fact, we find it notationally convenient to consider a slightly
%more general setup.
To recall, let $W$ be $d$-dimensional Wiener process on $(\Omega
,\mathcal{F},(\mathcal{F}_{t})_{t\geq0},\mathbb{P}), $ $%
A\in\mathbb{V}$ (some fixed finite-dimensional vector space) and
\[
h:\mathbb{R}^{d}\rightarrow L \bigl( \mathbb{V},\mathbb{R}^{d}
\bigr) ,\qquad %  g:\mathbb{R}^{d}\rightarrow\mathbb{R}^{d},
 \Sigma:\mathbb{R}%
^{d}\rightarrow L
\bigl( \mathbb{R}^{d},\mathbb{R}^{d} \bigr)
\]
are Lipschitz continuous coefficients, so that the stochastic
differential equation
%
%e5.1 #&#
\begin{eqnarray}\label{diffeq}
dX_{t}& =&h(X_t)A\,dt%+g\left( X_t \right) \,dt
+\Sigma ( X_t )
\,dW_{t},\qquad t\in\mathbb{R}_{+},
\nonumber\\[-8pt]\\[-8pt]\nonumber
X_{0}& =&x_{0},
\nonumber
\end{eqnarray}
has a unique solution. We are interested in estimation of $A$, as
function of
some observed sample path $X=X ( \omega ) : [
0,T ]
\rightarrow\mathbb{R}^{d}$ when the coefficients $f$ and $\Sigma$
are known. %Let us remark that the extension of our results to
%non-trivial $g$ is rather straightforward.
%
%le15 #&#
\begin{lemma}
\label{lem:girsanov}
Write $\mathbb{P}=\mathbb{P}^{A,\Sigma}$ for the path-space
measure induced by the solution $X$ to \eqref{diffeq}. Assume $%
C=\Sigma\Sigma^{T}$ is nondegenerate (say $%
c^{-1}I\leq C^{-1}\leq cI$ for some $c>0$). Then the $\mathbb{V}$%
-valued MLE (relative to $\mathbb{P}^{0}$), $A=\hat{A}_{T}$, is
characterized by
%
%e5.2 #&#
\begin{equation}
I_{T}\hat{A}_{T}=S_{T},
\end{equation}
where
\[
S_{T}=\int_{0}^{T}h(X_s)^{\top}C^{-1}(X_{s})
\,dX_{s} \in\mathbb{V} %%
^{\ast}
\]
and
\[
I_{T}=\int_{0}^{T}h(X_s)^{\top}C^{-1}(X_{s})h(X_s)
\,ds\in L \bigl( \mathbb{V},\mathbb{V}^{\ast} \bigr) .
\]
\end{lemma}

\begin{pf}
The statement follows from Girsanov's theorem;
see, for example, \cite{bibJacodShiryaev}, Theorem III.5.34.
% from standard theory of likelihood inference for
% diffusion processes (see e.g \cite{Kutoyants2004} and
%\cite{Liptser2001}).
\end{pf}

\begin{pf*}{Proof of Proposition \protect\ref{prop:wellposednessMLE}}
The first statement follows from Lemma \ref{lem:girsanov}.
Now we need to understand when $I_{T}$ is nondegenerate. To this end,
pick any
nonzero \mbox{$M \in\mathbb{V}$}. Then, with $g=hM$
we have
\[
\langle M,I_{T}M \rangle=\int_{0}^{T}
\bigl\langle g,C^{-1}g \bigr\rangle \,ds\geq0
\]
and since $ \langle g,C^{-1}g \rangle\geq0$ we see that $%
 \langle M,I_{T}M \rangle$ vanishes iff
\[
\bigl\langle g,C^{-1}g \bigr\rangle= \bigl\langle h ( X_{\cdot
}
)M ,C^{-1} ( X_{\cdot} ) h ( X_{\cdot} ) M \bigr\rangle
\equiv0
\]
on $ [ 0,T ]$. Thanks to (assumed) nondegeneracy of $C$
this happens iff
\[
h ( X_{\cdot} ) M \equiv0
\]
on $ [ 0,T ]$.
Hence, for every path in $R_h$, $I_T$ is invertible.
\end{pf*}

\begin{pf*}{Proof of Theorem \protect\ref{thm:mainTheorem}}
(i)~Follows as combination of Proposition \ref
{prop:wellposednessMLE} and Proposition \ref{prop:lift}.

(ii) Recall that for $( X, \mathbb{X} ) \in\rpspace
^\alpha_g$ we have
\begin{eqnarray*}
\hat{\mathbf{A}}( X, \mathbb{X} ) := \mathbf{I}_{T}^{-1}( X
) \mathbf{S}_{T}( X, \mathbb{X} ),
\end{eqnarray*}
where
\begin{eqnarray*}
\hspace*{-3pt}&& \mathbf{I}_{T}( X ) := \int_{0}^{T}h
( X_{s} )^T C^{-1}(X_{s}) h(X_{s}) \,ds,
\\
\hspace*{-3pt}&& \mathbf{S}_{T}( X, \mathbb{X} )_{i,j} := \sum
_k \int_{0}^{T} h_i(X_{s})^T C^{-1}_{jk}(X_{s})
\,d\mathbf {X}^k_{s},
\\
\hspace*{-3pt}&&{-}\sum_k \frac{1}{2} \int
_0^T \sum_{n,m} \bigl[ h_i(X_s) \partial _{x_n}
C^{-1}_{j k}(X_s) + \partial_{x_n}h_i(X_s) C^{-1}_{j k}(X_s)
\bigr] \Sigma_{n,m}(X_s) \Sigma_{k,m}(X_s) \,ds
\\
\hspace*{-3pt}&&\qquad = \sum_k \int_{0}^{T}
h_i(X_{s})^T C^{-1}_{j \cdot}(X_{s})
\,d\mathbf{X}_{s}
\\
\hspace*{-3pt}&&\quad\qquad{}- \frac{1}{2} \int_0^T
\operatorname{Tr}\bigl[ D\bigl( h_i C^{-1}_{j \cdot
}\bigr) (X_s) \Sigma(X_s) \Sigma(X_s)^T
\bigr] \,ds,
\end{eqnarray*}
where the $dX$ integral is understood as a rough path integral
(Section~\ref{rpreview}).
Note that in the definition of $\mathbf{S}_T$ we have formally used
the Stratonovich form $S^{\mathrm{Strat}}_T$,
which is sensible since rough path lift given in Proposition \ref
{prop:lift} is the Stratonovich lift of $X$.
%
%Remember that $\mathbf{I}_T^{-1}$ is well-defined by (i).

Now $\mathbf{S}_T(X,\mathbb{X})$ is continuous in rough path metric
by the just mentioned references.
Moreover, $\mathbf{I}_T(X)$ is obviously continuous in supremum metric,
and hence is its inverse [everywhere defined on $\FullMeasureSet$ by~(i)].

Finally, by Proposition 17.1 in \cite{FV},
$\mathbf{S}_T(X, \mathbb{X})|_{\mathbf{X} = \mathbf{X}(\omega)}$
coincides with $S_T(\omega)$.
$\mathbf{I}_T(X)|_{\mathbf{X} = \mathbf{X}(\omega)}$ trivially
coincides with $I_T(\omega)$, since it only depends on the path (the
first level of the rough path).
Hence, $\hat{\mathbf{A}}_{T} ( \mathbf{X} ( \omega
 )  ) = A_{T}(\omega)$ a.s. under $\mathbb{P}^{0,\Sigma}$.

(iii) This boils down to continuity of the rough integrals
as functions of
integrand $1$-form; see, for example, Theorem 10.47 in \cite{FV}.
\end{pf*}

%s6 #&#
\section{Misspecification of the noise}\label{sec:misspecification}

In this section, we investigate the behavior of the MLE under
misspecification of the noise $W$ in the sense that we suppose that the
true model has a driving process with nontrivial dependence structure
in time. For the sake of argument, we shall consider \eqref{sde} with
fractional Brownian noise. Fractional noise was first used in
stochastic modeling by Mandelbrot and van Nees in their seminal paper
\cite{Mandelbrot} and is now heavily used in such diverse fields as
the study of turbulence or mathematical finance, see, for example,
\cite{Shao,Cetal}.

For further simplicity, assume $\Sigma\equiv I$ so that the dynamics are
%
%e6.1 #&#
\begin{equation}
\label{Hsde} dX_t^H= h \bigl( X_t^H
\bigr) A \,dt+ dW_{t}^H,
\end{equation}
started from a fixed starting point $x_0$, with $W^H$ a
multi-dimensional Volterra fractional Brownian motion with Hurst index
$H \in(0,1)$, that is,
%
%e6.2 #&#
\begin{eqnarray}
\label{eq:volterra} W_t^H = \int_0^t
K^H(t,s) \,dW_s,
\end{eqnarray}
where $W$ is a standard Brownian motion, $K^H(t,s) = (t-s)^{H-1/2}$ is
the Volterra kernel.\footnote{The results of this section also hold
true for
classical fractional Brownian motion, using the kernel given in \cite
{bibDecreusefondUstunel1999}. The only difference is that the estimates
in the proof of Theorem \ref{thm:hurstStability} become more
technical.} Note that $W^H|_{H=1/2}=W$ is a standard Brownian motion
and that \mbox{$X^H \to X$}, for example, in probability uniformly on $[0,T]$ as $H\to
1/2$, where
%
%e6.3 #&#
\begin{equation}
\label{noHsde} dX_t=h ( X_t ) A \,dt+ dW_{t}.
\end{equation}

[Thanks to additivity of the noise in (\ref{Hsde}) this is a truly
elementary statement, namely a consequence of the continuity of the It\^
o-map as detailed below.]
Suppose now that the \textit{true} dynamics correspond to (\ref{Hsde})
with $H=1/2-\varepsilon$. Clearly, for very small $\varepsilon>0$, the
model (\ref{noHsde}), mathematically much easier, is still an
excellent description of the true dynamics. Indeed, it is well known
that in the additive noise case \eqref{Hsde} or \eqref{noHsde} the
solution map
\[
\bigl(W_t : t \in[0,T]\bigr) \mapsto\bigl(X_t^H
: t \in[0,T]\bigr)
\]
is locally Lipschitz continuous with respect to $\sup$-norm (see,
e.g., \cite{Dembo92},\break page~188).
We can then try to perform classical MLE estimation using the wrong
model (\ref{noHsde}) and write down the estimator $\hat{A}_T=I_T^{-1}S_T$
as was done in (\ref{MLEclassical}).

If we use the It\^o form of the estimator,
the It\^o integrals appearing blow up when applied to fractional
Brownian sample paths ``rougher'' than Brownian motion.%
\footnote{This is well known and in fact easy to see: just consider
the left-point Riemann--Stieltjes approximations to the It\^o-integral
$\int_0^1 W^H \,dW^H$ where $W^H$ is a scalar fractional Brownian
motion. When $H>1/2$ one has convergence to the Young integral
[actually equal to $(1/2)(W^H_1)^2$]. When $H=1/2$ one has convergence
to the It\^o integral. When $H<1/2$ the approximations diverge, as may
be seen by computing their (exploding) variance.}
As pointed out in Section~\ref{sec:stratonovich},
the Stratonovich version of the estimator is much more stable.
Using rough path theory, and in particular our rough path estimator
$\hat{\mathbf A}_T$, we can show not only that
the estimator remains well defined when $H=1/2-\varepsilon$, but also
that it behaves continuously in $H$. This is
spelled out fully in the following theorem.

%th16 #&#
\begin{theorem}
\label{thm:hurstStability}
Suppose that $H \in(1/3,1)$. Then, for every $\alpha\in(1/3,H)$,
there exists a geometric $\alpha$-H\"older rough path lift $\mathbf
{X^H} = (X^H, \mathbb{X^H})$ of $X^H$ (natural in the sense that
$\mathbf{X}^H$ is the common rough path limit, in probability, of
piecewise linear, mollifier or Karhunen--Loeve approximations to
$X^H)$. Moreover, there is a continuous modification of $\mathbf
{X}^H:H \in(1/3,1)$. As a consequence, $\hat{\mathbf A}_T (\mathbf
{X}^H)$ is well defined and robust with respect to the Hurst parameter,
\[
\hat{\mathbf A}_T \bigl(X^H, \mathbb{X}^H
\bigr) \to\hat{\mathbf A}_T (X, \mathbb{X})
\]
almost surely, as $H \to1/2$, where $(X, \mathbb{X})$ is the lift
$\mathbf{X}$ of $X$ from Theorem~\ref{thm:mainTheorem}.
\end{theorem}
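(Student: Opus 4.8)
The plan is to decouple the statement into a purely probabilistic part (existence and continuity-in-$H$ of the rough path lift $\mathbf{X}^H$) and a purely deterministic part (continuity of $\mathbf{\hat A}_T$ on $\FullMeasureSet$, already granted by Theorem \ref{thm:mainTheorem}(iv)--(v)), and then glue them. First I would address the lift. Since $\Sigma\equiv I$, additivity of the noise means $X^H_t = x_0 + \int_0^t Ah(X^H_s)\,ds + W^H_t$, so $X^H$ differs from the fractional Brownian path $W^H$ by an absolutely continuous (indeed $C^1$, since $h$ is Lipschitz and $X^H$ continuous) correction term. Hence the rough path lift of $X^H$ is obtained from the lift $\mathbf{W}^H$ of $W^H$ by the deterministic translation (Young/rough) operator: $\mathbb{X}^H_{s,t} = \mathbb{W}^H_{s,t} + \int_s^t W^H_{s,r}\otimes d(\text{drift})_r + \int_s^t (\text{drift})_{s,r}\otimes dW^H_r + \int_s^t (\text{drift})_{s,r}\otimes d(\text{drift})_r$, all of which are classical integrals except the lift of $W^H$ itself. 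So the real content is the existence of a geometric $\alpha$-Hölder lift $\mathbf{W}^H$ of the Volterra fractional Brownian motion for $\alpha\in(1/3,H)$, together with a continuous-in-$H$ modification; this is standard Gaussian rough path theory (the covariance of $W^H$ has finite $2$D $\rho$-variation with $\rho<3/2$ for $H>1/3$; see \cite{FV}), and the joint continuity in $H$ follows from a Kolmogorov-type criterion applied to the two-parameter Gaussian field $(H,t)\mapsto W^H_t$, using that $K^H(t,s)=(t-s)^{H-1/2}$ depends analytically on $H$ on the relevant range, giving $L^q$-bounds on $\rho_\alpha(\mathbf{W}^H,\mathbf{W}^{H'})$ of the form $C|H-H'|^\theta$. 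Fixing a single null set and a single $\alpha\in(1/3,1/2)$ then yields the almost sure statement.

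Second, having the lift and its continuity, I would invoke Theorem \ref{thm:mainTheorem}: on the full-measure set $\FullMeasureSet\subset\mathcal{D}^\alpha_g$ the map $\mathbf{\hat A}_T$ is deterministic and continuous in the $\alpha$-Hölder rough path metric (part (iv)); moreover, for the true driver $X=X^{1/2}$ (standard Brownian, $\Sigma=I$) part (i) guarantees $\mathbb{P}^{0,I}(X\in R_h)=1$, hence $\mathbf{X}\in\FullMeasureSet$ a.s. One must check that $\mathbf{X}^H\in\FullMeasureSet$ as well for $H$ near $1/2$, i.e.\ that $X^H\in R_h$ (the span condition); the cleanest route is to note that since $R_h$ is an \emph{open} condition along a continuous path (span $=\R^d$ on a compact time interval is stable under uniform perturbations once it holds, because one can pick finitely many times $t_1,\dots,t_d$ with $h(X_{t_i})$ a basis and perturb), and since $X^H\to X$ uniformly a.s.\ along a sequence $H_n\to 1/2$, membership in $R_h$ propagates; alternatively one can run the argument of Theorem \ref{thm:mainTheorem}(i) directly for $X^H$, as that proof only uses non-degeneracy of the noise and the critical-point hypothesis on $h$, both still in force. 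Then $\mathbf{\hat A}_T(\mathbf{X}^H)=I_T^{-1}(X^H)S_T(X^H,\mathbb{X}^H)\to I_T^{-1}(X)S_T(X,\mathbb{X})=\mathbf{\hat A}_T(\mathbf{X})$ follows from $\rho_\alpha(\mathbf{X}^H,\mathbf{X})\to 0$ and continuity of $\mathbf{\hat A}_T$.

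I expect the main obstacle to be the joint-in-$H$ regularity of the Gaussian lift: one needs not merely that each $W^H$ lifts, but a quantitative two-parameter estimate controlling $\mathbb{E}\,\rho_\alpha(\mathbf{W}^H,\mathbf{W}^{H'})^q$ uniformly as $H,H'$ range over a compact subinterval of $(1/3,1)$, which is delicate near the lower endpoint $H=1/3$ where the $2$D variation of the covariance degenerates. Away from that endpoint — and in particular near $H=1/2$, which is all the theorem claims for the convergence assertion — the kernel $K^H$ and its $H$-derivative are uniformly well-behaved, the covariance of $W^H$ has $\rho$-variation bounded uniformly in $H$ by some $\rho_0<3/2$, and the Cameron–Martin norms of the increments $\partial_H W^H$ can be estimated, so the Kolmogorov argument goes through; this is where the footnote's remark that the classical-fBm case is "more technical" bites. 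A secondary, minor point is that $\FullMeasureSet$ was defined relative to a fixed $\alpha$; one should fix $\alpha\in(1/3,1/2)$ once and for all at the start so that $\mathbf{X}$, all $\mathbf{X}^H$ with $H$ near $1/2$, and the continuity of $\mathbf{\hat A}_T$ all live in the same space $\mathcal{D}^\alpha_g([0,T],\R^d)$.
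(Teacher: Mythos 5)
Your proposal is correct and follows essentially the same route as the paper: derive an $L^q$-bound $\mathbb{E}[\rho_\alpha(\mathbf{W}^H,\mathbf{W}^{H'})^q]\le C|H-H'|^\theta$ from the elementary covariance estimate for the Volterra kernel, apply Kolmogorov's continuity criterion in the parameter $H$ to obtain a continuous modification of the Gaussian lift, transfer this to $\mathbf{X}^H$ (the paper via continuity of the It\^o--Lyons map for the RDE driven by $\mathbf{W}^H$, you via the additive translation $X^H=x_0+\int_0^\cdot Ah(X^H_s)\,ds+W^H$ --- equivalent here since $\Sigma\equiv I$), and conclude by the continuity of $\mathbf{\hat A}_T$ from Theorem \ref{thm:mainTheorem}(iv). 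The one place you go beyond the paper's written argument is the explicit verification that $X^H\in R_h$, so that $\mathbf{\hat A}_T(\mathbf{X}^H)$ is actually well-defined --- a point the paper's proof passes over in silence --- and your observation that the span condition is stable under uniform perturbation (or can be re-derived for $X^H$ directly) is a sensible way to close that small gap.
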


\begin{pf}
Without loss of generality $T=1$. It is a well-known fact (Section~15
in \cite{FV}) that for fixed $H \in(1/3,1]$, $X^H$ can be lifted to
an $\alpha$-H\"older rough path $\mathbf{X}^H = (X^H, \mathbb{X}^H)$.

We will apply Kolmogorov's continuity theorem to construct $\mathbf
{W}^H$ that is almost surely continuous in $H$.
First, using \eqref{eq:volterra}
\begin{eqnarray*}
R_{ W^H - W^{H'} }(s,t) &=& \mathbb{E}\bigl[ \bigl(W^H_s
- W^{H'}_s\bigr) \bigl(W^H_t -
W^{H'}_t\bigr) \bigr]
\\
&\le& \sup_{t\in[0,1]} \mathbb{E}\bigl[ \bigl(W^H_t
- W^{H'}_t\bigr)^2 \bigr]
\\
&=& \sup_{t\in[0,1]} \int_0^t
\bigl( \llvert t-r\rrvert ^{H-1/2} - \llvert t-r\rrvert ^{H'-1/2}
\bigr)^2 \,dr
\\
&=& \int_0^1 \bigl( r^{H-1/2} -
r^{H'-1/2} \bigr)^2 \,dr
\\
% &\le&
% \sup_{t\in[0,1]} \int_0^t
% \left(
% |H-H'| \ln( |t-r| ) |t-r|^{\xi(r)-1/2}
% \right)^2 \,dr \\
% &\le
% |H-H'|^2
% \sup_{t\in[0,1]} \int_0^t \ln( |t-r| )^2 |t-r|^{ 2 ( 1/3-1/2 )} \,dr \\
% &\le
% C |H-H'|^2
&=&
O \bigl(\bigl|H-H'\bigr|^2\bigr).
\end{eqnarray*}
We can now apply Remark 15.38 in \cite{FV} to get
\begin{eqnarray*}
\mathbb{E}\bigl[ \rho_\alpha\bigl( \mathbf{W}^H,
\mathbf{W}^{H'} \bigr)^q \bigr] \le C \bigl\llvert
H-H'\bigr\rrvert ^{\theta},
\end{eqnarray*}
for some $q,C$ large enough and $\theta>0$ small enough.
Applying Kolmogorov's continuity criterion, we get a version of
$\mathbf{W}^H$ that is continuous in $H$.
Since $\mathbf{X}^H$ is the solution to a rough differential equation
driven by $\mathbf{W}^H$, that is, the continuous image of $\mathbf
{W}^H$, it is clear that
$\mathbf{X}^H$ is also continuous in $H$ (with respect to \mbox{$\alpha$-}H\"older rough path topology). The convergence of $\hat{\mathbf A}_T
(X^H, \mathbb{X^H})$ follows now from Theorem \ref{thm:mainTheorem}(ii).
\end{pf}

%\section{Explicit computations for Ornstein-Uhlenbeck dynamics}
%s7 #&#
\section{Failure of continuity for the classical MLE} \label
{sec:counterexamples}

We consider the two-dimensional Ornstein--Uhlenbeck process. This class
of processes was first used by Ornstein and Uhlenbeck to describe the
movement of a particle due to random impulses known as physical
Brownian motion (see \cite{FGL} for a detailed analysis in a rough
path context).
Later these dynamics were applied extensively in finance, for example,
to model commodity prices \cite{schwartz} or interest rates, where it
is called the Vasicek model \cite{vasicek}.

More precisely, let $A \in\mathbb{V} := L(\mathbb{R}^2, \mathbb
{R}^2)$, $h(x)=x$ for all $x$, $g \equiv0$ and $\Sigma= I$ and
consider the model\footnote{$\ldots$which of course fits in the framework
of this paper, as pointed out in Example \ref{rem:OU}.}
\begin{eqnarray*}
dX_t &=& A X_t \,dt + dW_t,
\\
X_0 &=& x_0 \in\mathbb{R}^2.
\end{eqnarray*}
%
%Of course, this is a fortiori covered by the results in this paper
%(cf. Example \ref{rem:OU} ) and
By Lemma \ref{lem:girsanov}, the (classical) likelihood estimator
$\hat A_T \in\R^{2\times2}$ is obtained from the relation
%
%e7.1 #&#
\begin{eqnarray}
\label{eq:abstract} I_T \hat A_T = S_T.
\end{eqnarray}
%
%Our goal in this section is now twofold. First, we give $\hat A_T$ in
%explicit form in order to see its precise dependence on iterated
%integrals of the observed path.
%Secondly, we show that pathwise stability fails to hold true for this
%MLE $\hat A_T$.
%% Then we give a counterexample that demonstrates the stability
%problems that the classical MLE $\hat A_T$ exhibits.

A straightforward calculation gives
%
%e7.2 #&#
\begin{eqnarray}
\label{OUmle}
\hat A^{i,j}_T (\omega) = \frac{1}{U(X)}
\biggl( \int_0^T \bigl(X^{\bar
\jmath}_s
\bigr)^2 \,ds \int_0^T
X^j_s \,dX^i_s - \int
_0^T X^i_s
X^{\bar
\imath}_s \,ds \int_0^T
X^{\bar\jmath}_s \,dX^i_s \biggr),\hspace*{-30pt}
\end{eqnarray}
where $\bar\imath:= 3 - i, \bar\jmath:= 3 - j$,
$U(X) = \int_0^T (X^1_r)^2 \,dr \int_0^t (X^2_r)^2 \,dr -  (\int_0^T X^1_r X^2_r \,dr )^2$
and all stochastic integrals are understood in It\^o sense. Note that
this allows us
to see the precise dependence of the MLE on the iterated integrals of
the observation.
The Stratonovich version reads as
\begin{eqnarray*}
\hat A^{\mathrm{Strat},i,j}_T (\omega) &=& \frac{1}{U(X)} \biggl( \int
_0^T \bigl(X^{\bar\jmath}_s
\bigr)^2 \,ds \biggl( \int_0^T
X^j_s \circ dX^i_s - \delta
_{i,j} \frac{T}{2} \biggr)
\\
&&{}  - \int_0^T X^i_s
X^{\bar\imath}_s \,ds \biggl( \int_0^T
X^{\bar\jmath}_s \circ \,dX^i_s -
\delta_{\jmath,i} \frac{T}{2} \biggr) \biggr).
\end{eqnarray*}
As shown in Section~\ref{sec:stratonovich} this estimator,
\emph{defined} on smooth path by replacing Stratonovich with
Riemann--Stieltjes integrals,
possesses a certain continuity in probability.

We now show that \emph{pathwise} stability fails for this
MLE.\footnote
{
In fact, more is true: by the result in \cite{L90}, there exists no
continuous functional $F$ on pathspace,
such that $\hat A^{\mathrm{Strat},i,j}_T (\omega) = F(X)$.
}
%More precisely, we claim that there exists no (!) continuous
%functional $F$ on pathspace (more precisely, on $R_h$, with $h(x)=x$)
%so that
%$$
% \hat A^{i,j}_T (\omega) = F ( X (\omega) ).
%$$
To this end, it suffices to consider the case $i=j=1$ and we construct
a sequence of observations paths $X_n$ that converges uniformly to some
limit $X$, but
\[
\bigl\llvert \hat A^{\mathrm{Strat},1,1}_T (X_n) - \hat
A^{\mathrm{Strat},1,1}_T (X)\bigr\rrvert \to\infty,
\]
as $n \to\infty$. This means that observations can be arbitrarily
close in uniform norm, but the corresponding estimates for $A$ diverge.
At the core of this robustness problem lies, as we will see below, the
fact that multi-dimensional iterated integrals (as the ones appearing
in $\hat A_T$) are discontinuous in $\sup$-norm.

We modify the usually given example of ``spinning fast enough around
the origin'' (see, e.g., Section~1.5.2 in \cite{LCT}),
since we want the limiting path to yield an $I_T(X)$ that is invertible.

We start with the path $X: [0,1] \to\R^2$ that goes, at constant
speed, clockwise, through the square with corners $(0,0)$ and $(1,1)$.
This path lies in the set $R_h$ of Theorem \ref{thm:mainTheorem}
(see Remark \ref{rem:OU} for the definition of $h$).

Now we attach a fast spinning loop at the end as follows:
\begin{eqnarray*}
X_n(t) &:=& X \biggl( \frac{n}{n-1} t \biggr), \qquad t \in
\bigl[0, (n-1)/n\bigr],
\\
X_n(t) &:=& \frac{1}{n} \bigl( e^{i 2 \pi n^3 (t - (n-1)/n)} - 1 \bigr),
\qquad t \in\bigl[(n-1)/n,1\bigr].
\end{eqnarray*}

Evaluating the upper left component of the likelihood estimator $\hat
A_T$ from \eqref{OUmle} for $X= (X^{(1)},X^{(2)})$ and $T=1$ yields
\begin{eqnarray*}
%\hat a^1_T (x) &=& \frac{\int_0^T x_1(r) x_2(r) \,dr}{ \int_0^T x_1(r)
%x_1(r) \,dr \int_0^T x_2(r)
%x_2(r) \,dr - \int_0^T x_1(r) x_2(r) \,dr \int_0^T x_1(r) x_2(r)
%\,dr }
%\mathcal{A}_{0,T} (x)\\
%&=&: U(x) \mathcal{A}_{0,T} (x).
\hat
A^{\mathrm{Strat},1,1}_T (X) &=& \frac{1}{U(X)} \biggl( \int
_0^1 X^{(2)}_r
X^{(2)}_r \,dr \biggl( \int_0^1
X^{(1)}_r\, d X^{(1)}_r-
\frac{T}{2} \biggr)
\\
&&{} - \int_0^1
X^{(1)}_r X^{(2)}_r \,dr \biggl( \int
_0^1 X^{(2)}_r\, d
X^{(1)}_r- \frac{T}{2} \biggr) \biggr).
%&=& \frac{}{\int_0^T }
%&=&: U(x) \mathcal{A}_{0,T} (x).
\end{eqnarray*}
The prefactor $U(X)$, consisting only of Riemann integrals, is
continuous in supremum and so $U(X^{(n)})$ converges to a finite limit
as $n \to\infty$.
The same holds true for the first factor in the large bracket (the
stochastic integral is seen to be continuous by an application of It\^
o's formula)
and the factor $\int_0^1 X^{(1)}_r X^{(2)}_r \,dr$ in the last term. Now
for $\int_0^1 X^{(2)}_r\, d X^{(1)}_r$, note first that
\begin{eqnarray*}
\int_0^1 X^{(2)}(r) \,d
X^{(1)}(r) &=& \int_0^{(n-1)/n}
X^{(2)}(r) \,d X^{(1)}(r)
\\
&&{}+ \int_{(n-1)/n}^1
X^{(2)}(r) - X^{(2)}\bigl( (n-1)/n \bigr) \,d
X^{(1)}(r).
\end{eqnarray*}

Moreover, since the $X^{(n)}$ have the same value at $t=0, (n-1)/n,1$
it is easy to see that
\begin{eqnarray*}
\int_0^{(n-1)/n} X^{(2)}(r) \,d
X^{(1)}(r) = \mathcal{A}_{0,(n-1)/n}\bigl( X^{(n)} \bigr),
\\
\int_{(n-1)/n}^1 X^{(2)}(r) \,d
X^{(1)}(r) = \mathcal{A}_{(n-1)/n,1}\bigl( X^{(n)} \bigr),
\end{eqnarray*}
%
%$\ydxx{0}{(n-1)/n} = \mathcal{A}_{s,t}(x^{(n)})$
where $\mathcal{A}_{s,t}(X)$ is (two times) the area between that curve
$ \{ X (r) \equiv(X_1 (r) ,\break X_2 (r) ) : s \le r \le t \}$ and the chord from
$X (t) $ to $X (s)$.%
%
% ??xi??
%?? xj
%??
%
%and the chord from ??xi
%t?? xj
%t  to ??xi
%???? xj
%?? where multiplicity and orientation
%are taken into account in the calculation
%
% signed area enclosed by $x = (x_1,x_2)$ on $(s,t)$.%
\footnote{It is given as
\[
\mathcal{A}_{s,t}(X) = \int_{s}^{t}
\bigl(X_1(r) - X_1(s) \bigr) \,dx_2(r) - \int
_s^t \bigl( X_2(r) -
X_2(s) \bigr) \,dx_1(r).
\]
}
Hence,\break $\mathcal{A}_{0,(n-1)/n}( X^{(n)} ) \equiv-2$
and
$\mathcal{A}_{(n-1)/n,1}( X^{(n)} ) = -\pi n$
and, therefore, as desired,
\[
\bigl\llvert \hat A^{\mathrm{Strat},1,1}_T(X_n)\bigr\rrvert
\to\infty.
\]

In conclusion, the estimation problem is \textit{not well-posed} if one
measures distance of paths in supremum norm.
Let us note that by working in stronger topologies on pathspace, say
$\alpha$-H\"older with $\alpha< 1/2$ (so that they support Wiener
measure), the situation does not
change; see, for example, \cite{L90}.

On the other hand, as was seen in Section~\ref{sec:main}, the
``rough'' estimator $\hat{\mathbf A}_T$ built in
Theorem \ref{thm:mainTheorem} restores continuity. In the present
example, this reduces to the fact that
the iterated stochastic integrals which appear in $\hat A_T(\omega)$
may be rewritten in terms of
rough integrals against the rough path lift of $X$.

%We emphasize that stronger pathspace norms such as $\alpha$-H\"older
%with $\alpha< 1/2$ will not help;
%see \cite{L90}. For the desired stability, it is crucial to use rough
%path spaces.

%s8 #&#
\section{Numerical examples}
\label{sec:numerical}
We illustrate our theoretical results in two numerical examples.
The first example uses a fractional Brownian motion with Hurst
parameter $H$ as driving noise.
The It\^o integral is not even well defined for Hurst parameter $H \neq
\frac{1}{2}$,
but we show that the estimator using the rough path lift of the
fractional Brownian motion performs well in this setting.
We use Stratonovich-type Riemann sums to approximate the rough path
lift (and hence are strictly speaking
only performing the robustification laid out in Section~\ref
{sec:stratonovich}),
and hence this example can foremost be seen as a strong encouragement
to use them over It\^o-type approximations.

In the second example, the driving noise is replaced by a physical
Brownian motion in a magnetic field.
On the level of the path this is known to converge to Brownian motion,
but its lifted rough path does not (cf. \cite{FGL}).
We demonstrate that the classical MLE breaks down in this setting and
how a deterministic correction on the second level
of the rough path leads nonetheless to good estimation results.

%s8.1 #&#
\subsection{Fractional Ornstein--Uhlenbeck process}

%In this section we demonstrate in the setting of Example \ref{Ex1} the
%instability of the MLE due to the It\^o integrals, if one does not get
%rid of the stochastic integral via integration by parts. Furthermore,
%we show that by using Stratonovich-type approximations as suggested in
%Section \ref{sec:stratonovich} we obtain a stable estimator.

%s8.1.1 #&#
\subsubsection{One-dimensional}
%To further illustrate the problem of the blow-up of It\^o integrals,
%even in one dimension,
%we simulate samples from a one-dimensional fractional
%Ornstein-Uhlenbeck process defined by

In this section, we demonstrate in the setting of Example \ref{Ex1}
the instability of the MLE due to the It\^o integrals, if one does not
get rid of the stochastic integral via integration by parts.
Furthermore, we show that by using Stratonovich-type approximations as
suggested in Section~\ref{sec:stratonovich} we obtain a stable estimator.

We simulate samples from a one-dimensional fractional
Ornstein--Uhlenbeck process defined by
%
%e8.1 #&#
\begin{equation}
dX_t^H=A X_t^H \,dt+
dW_{t}^H, \qquad t \in[0,T].
\end{equation}
%
%The interest in fractional OU processes ranges from mathematical
%questions to very applied problems; Cheridito \cite{Cheridito2003} and
%Shao \cite{Shao} are two good examples. Statistical aspects were
%recently discussed by Nualart and Hu in \cite{Nualart2010}. Suppose
%that the observer is not aware of the fact that the Hurst index $H
%\neq1/2$ ($H = 1/2$ would be the correctly specified case).
We use an exact simulation scheme to draw equidistant samples
\[
X_\Delta^H, X_{2 \Delta}^H, \ldots,
X_{n \Delta}^H \qquad\mbox {for } \Delta> 0
\]
from $X^H$ such that $T = n \Delta$. The discretized maximum
likelihood estimator $\hat A_T$ for A in this model is given by
\begin{eqnarray*}
\hat{A}_{T}^n\bigl(X^H\bigr) &=&
\frac{\sum_{i =1}^{n-1} X^H_{i\Delta}
(X^H_{(i+1)\Delta}-X^H_{i\Delta})}{\sum_{i =1}^{n-1} (X^H_{i\Delta
})^2 \Delta}.
\end{eqnarray*}
From Theorem \ref{thm:mainTheorem}, we obtain the discretized rough MLE
\[
\hat{\mathbf{{A}}}_{T}^n\bigl(\mathbf{X}^H
\bigr) = \frac{\sum_{i=1}^{n-1} X_{i\Delta}^H (X^H_{(i+1)\Delta
}-X^H_{i\Delta})+ \mathbb{X}_{i\Delta}-\mathbb{X}_{(i+1)\Delta}
}{\sum_{i =1}^{n-1} (X^H_{i\Delta})^2 \Delta}.
\]
In Figure~\ref{var_H}, Monte Carlo estimates of variances of $\hat
A_T^n$ and the rough MLE $\hat{ \mathbf{A}}_T^n$ are depicted for
varying Hurst index from each 500 Monte Carlo iterations.
The sample size is $n = 100$ (i.e., the time mesh size of observation
is $1/n$) and the time horizon $T = 1$. We clearly see that the
variance increases when $H$ moves away from $1/2$ and explodes for $H$
going to $0$. On the contrary, the rough MLE remains stable on the
whole range of $H$ values with an almost constant variance.

Note that the rate of convergence for the variance of $\hat A_T^n$ is
proportional to $n(1/2- H)$ so that the effect that can be seen in
Figure~\ref{var_H} becomes more severe with growing sample size. This
connection is depicted in Figure~\ref{var_n} where we see Monte Carlo
estimates of the variance of $\hat A_T$ for increasing sample size. The
number of Monte Carlo iterations for each $n$ is $N = 100$, the time
horizon $T = 1$ and the Hurst index $H= 0.35$.
%
%f1 #&#
\begin{figure}

\includegraphics{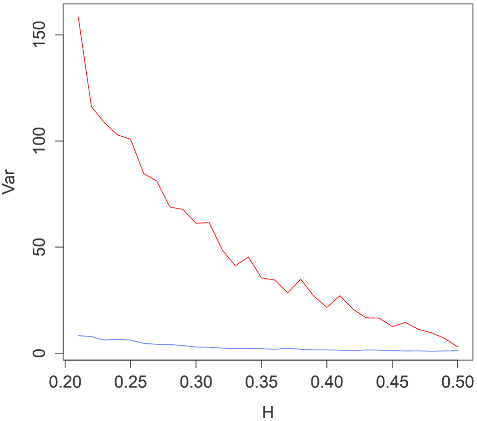}

\caption{Monte Carlo estimate of the variance of the classical MLE
(red) and rough MLE (blue) for different Hurst indices $H$.} \label{var_H}
\end{figure}

%f2 #&#
\begin{figure}

\includegraphics{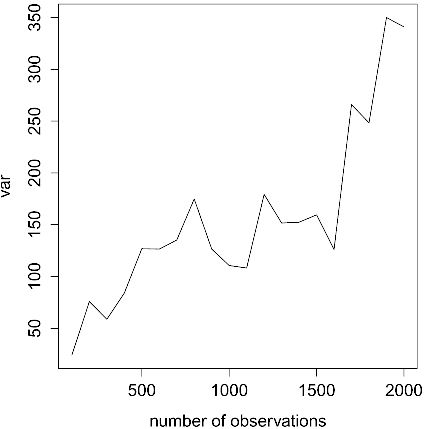}

\caption{Estimated variance of the classical MLE for varying sample
size.} \label{var_n}
\end{figure}

In Table~\ref{tableOU}, the mean and standard deviation of the RMLE
$\hat{\mathbf{{A}}}_{T}^n$ are estimated for the fractional
Ornstein--Uhlenbeck model and for various Hurst indices. Each estimate
consists of 1000 Monte Carlo iterations and the true parameter was
\mbox{$A=2$}. We find that already for quite moderate sample size of $n=100$
the estimator performs very well. We also observe that when $T$ grows a
slight discretization bias appears that is typically of the order
$\Delta$. Surprisingly, the RMLE gives accurate results even when the
Hurst parameter is far away from the classical case at $H = 1/2$.

%t1 #&#
\begin{table}[b]
\tabcolsep=0pt
\caption{Mean and standard deviation of the RMLE $\hat{\mathbf
{{A}}}_{T}^n$ for varying Hurst indices in the misspecified model}\label{tableOU}
\begin{tabular*}{\tablewidth}{@{\extracolsep{\fill}}@{}lcccccccc@{}}
\hline
&&&\multicolumn{2}{c}{$\bolds{H=0.5}$} & \multicolumn{2}{c}{$\bolds{H=0.4}$} & \multicolumn{2}{c@{}}{$\bolds{H=0.3}$}\\[-6pt]
&&&\multicolumn{2}{c}{\hrulefill} & \multicolumn{2}{c}{\hrulefill} & \multicolumn{2}{c@{}}{\hrulefill}\\
$\bolds{a}$ &$\bolds{T}$ & $\bolds{n}$ & \textbf{Mean} & \textbf{Std dev} & \textbf{Mean} & \textbf{Std dev} & \textbf{Mean} & \textbf{Std dev}\\
\hline
2 & 1 & 100 & 2.0& 0.20\phantom{0} & 2.0& 0.18\phantom{0} &2.0 &0.33\phantom{0} \\
& & 200 & 2.0& 0.17\phantom{0} & 2.0 & 0.18\phantom{0} &2.0 &0.20\phantom{0} \\
& & 500 & 2.0 & 0.16\phantom{0} & 2.0& 0.19\phantom{0} &2.0 &0.19\phantom{0}
\\[3pt]
& 2 & 100 & 2.0 & 0.023 & 2.0 & 0.023 & 2.0 & 0.026 \\
& & 200 & 2.0 & 0.025 & 2.0 & 0.026 & 2.0 & 0.024 \\
& & 500 & 2.0 & 0.022 & 2.0& 0.022 &2.0 & 0.051
\\[3pt]
& 5 & 100 & 2.1 & 8.1e$-$05 & 2.1 & 8.3e$-$05 & 2.1 & 9.2e$-$05 \\
& & 200 & 2.1 & 6.7e$-$05 & 2.1 & 6.6e$-$05 & 2.1 & 7.0e$-$05 \\
& & 500 & 2.0 & 5.6e$-$05 & 2.0 & 5.6e$-$05 & 2.0 & 6.4e$-$05\\
\hline
\end{tabular*}
\end{table}

%s8.1.2 #&#
\subsubsection{Two-dimensional}
Here, we give numerical examples for the two-dimensional
Ornstein--Uhlenbeck dynamics. % from Section \ref{sec:counterexamples}.
%The driving signal is now a fractional Brownian motion, and the
%problems inherent to this setting were discussed in Section
%\ref{sec:misspecification}.
%
We apply a Euler scheme to draw an equidistant sample $X_\Delta^H,
X_{2 \Delta}^H, \ldots, X_{n \Delta}^H$ for $\Delta>0$ from the
process $X$ solving
\begin{eqnarray*}
dX_t &=& A X_t \,dt + dW_t^H,
\\
X_0 &=& x_0 \in\mathbb{R}^2,
\end{eqnarray*}
where $W^H$ is a two-dimensional fractional Brownian motion with Hurst
parameter $H \in(0,1)$ and $A$ is given by
\begin{eqnarray*}
A = \pmatrix{ 1 & 2
\vspace*{3pt}\cr
-2 & 1}.
\end{eqnarray*}

The expression for the classical maximum likelihood estimator [see
\eqref{OUmle}]
is of course only valid for $H=1/2$.
Moreover, for $H < 1/2$ the It\^o integrals appearing in that estimator
are in general not even well defined.
Nonetheless, since we simulate on a discrete time grid, we
can calculate its discretized version, replacing the stochastic
integrals by It\^o-type Riemann sums.

On the other hand, for every $H>1/3$ fractional Brownian motion
possesses a natural rough path lift
(see, e.g., \cite{FV}, Chapter~15),
so the expression for the ``rough'' MLE \eqref{eq:roughMLE}
is at least well defined, also for $H\neq1/2$.
Since we deal with a simulation at discrete timepoints, we have
to approximate this rough path lift.
We shall use Stratonovich-type Riemann sums, which are well known to
converge (see,
e.g.,~\cite{FH}, Chapter~10, and the references therein).
We then plug the result into the rough path estimator \eqref{eq:roughMLE}.

We give the estimation results for the upper right coordinate of $A$
with true value equal to $2$,
on a discrete grid for varying number of observations $n$ and
observation length $T$.

In Table~\ref{table:2DOU}, the estimated mean and standard deviation
for the discretized classical MLE (top) and the discretized ``rough''
MLE (bottom) are given. Each value is based on 100 Monte Carlo runs of
the estimator.

We find that for $H=1/2$ the classical MLE performs well if the
observation length $T$ is large enough. When $H$ moves away from $1/2$
the instability of the estimator becomes apparent. The standard
deviation increases significantly and the estimator is strongly biased.

In contrast to that the ``rough'' MLE $\hat{\mathbf{A}}_{T,n}^{1,2}$
performs equally well over the whole range of Hurst indices. For $H =
1/2$, both estimators give similar results as expected from our results
in Theorem \ref{thm:mainTheorem} whereas in the dependent regime
$\hat{\mathbf{A}}_{T,n}^{1,2}$ clearly outperforms the classical MLE.

%s8.2 #&#
\subsection{Physical Brownian motion in a magnetic field}
%The use of Brownian motion as the usual driving noise
%for stochastic differential equations is not without controversy.
%For one, it conflicts

%t2 #&#
\begin{table}%[b]
\tabcolsep=0pt
\caption{We\vspace*{1pt} consider the ``true'' parameter value $A^{1,2}=2$ and give
estimates of the mean and standard deviation of the classical MLE
${\hat{{A}}}_{T,n}^{1,2}$ (top) and the ``rough'' MLE $\hat{\mathbf
{A}}_{T,n}^{1,2}$ (bottom) based on $100$ Monte Carlo iterations for
varying Hurst indices for the 2-dim. OU process. Here, $n$  is the
number of time-steps in the Euler approximation}\label{table:2DOU}
\begin{tabular*}{\tablewidth}{@{\extracolsep{\fill}}@{}lccd{1.2}cd{1.2}cd{1.2}@{}}
\hline
&&\multicolumn{2}{c}{$\bolds{H=0.5}$} & \multicolumn{2}{c}{$\bolds{H=0.4}$} & \multicolumn{2}{c@{}}{$\bolds{H=0.35}$}\\[-6pt]
&&\multicolumn{2}{c}{\hrulefill} & \multicolumn{2}{c}{\hrulefill} & \multicolumn{2}{c@{}}{\hrulefill}\\
$\bolds{T}$ & $\bolds{n}$ & \textbf{Mean} & \multicolumn{1}{c}{\textbf{Std dev}} & \textbf{Mean} & \multicolumn{1}{c}{\textbf{Std dev}} & \textbf{Mean} & \multicolumn{1}{c@{}}{\textbf{Std dev}}\\
\hline
 \phantom{0}1 & 100 & 2.7 & 1.6 & 3.4 & 2.0 & 5.5 & 2.9 \\
 & 200 & 2.7 & 1.8 & 3.7 & 2.6 & 6.9 & 3.2 \\
 & 500 & 2.7 & 1.8 & 4.8 & 2.7 & 9.7 & 4.6 \\[3pt]
 \phantom{0}5 & 100 & 2.1& 0.90 & 2.7& 1.13 &3.9 &1.1 \\
 & 200 & 2.2& 0.99 & 3.1 & 1.1 &4.9 &1.4 \\
 & 500 & 2.2 & 1.06 & 3.9& 1.3 &6.5 &1.9 \\[3pt]
 10 & 100 & 2.0 & 0.61 & 2.5 & 0.70 & 3.0 & 0.43 \\
 & 200 & 2.0 & 0.73 & 2.7 & 0.75 & 3.9 & 0.54 \\
 & 500 & 2.1 & 0.74 & 3.3& 0.90 &5.1 & 0.90
\\[6pt]
%&&&\multicolumn{2}{c}{H=0.5} & \multicolumn{2}{c}{H=0.4} &
%\multicolumn{2}{c}{H=0.35}\\
%&$T$ & $n$ & mean & std dev & mean & std dev & mean & std dev\\
%\hline
 \phantom{0}1 & 100 & 2.7 & 1.6 & 2.3 & 1.6 & 2.1 & 2.2 \\
 & 200 & 2.8 & 1.6 & 2.4 & 2.0 & 2.2 & 1.9 \\
 & 500 & 2.7 & 1.6 & 2.3 & 1.9 & 2.2 & 2.2 \\[3pt]
 \phantom{0}5 & 100 & 2.2& 1.0 & 2.1& 1.2 &1.8 &1.1 \\
 & 200 & 2.3& 1.0 & 1.9 & 1.1 &1.8 &1.2 \\
 & 500 & 2.2 & 1.0 & 1.9& 1.1 &1.9 &1.3 \\[3pt]
 10 & 100 & 2.0 & 0.64 & 1.8 & 0.71 & 1.8 & 0.83 \\
 & 200 & 2.1 & 0.75 & 1.9 & 0.86 & 1.8 & 0.87 \\
 & 500 & 2.0 & 0.75 & 2.0& 0.91 &1.9 & 0.89
 \\
 \hline
\end{tabular*}
\end{table}

The dynamics of
a two-dimensional physical Brownian motion $\bar{W}^{\alpha,m}$
in a magnetic field are given by (see~\cite{FGL})
\begin{eqnarray*}
d\bar{W}^{\alpha,m}_t &=& \frac{1}{m} P^{\alpha,m}_t
\,dt,
\\
dP^{\alpha,m}_t &=& - \frac{1}{m} M P^{\alpha,m} \,dt +
dW_t.
\end{eqnarray*}
Here,
\begin{eqnarray*}
M = M_\alpha = \pmatrix{ 1 & \alpha
\vspace*{3pt}\cr
-\alpha& 1},
\end{eqnarray*}
with strength of the magnetic field given by the scalar parameter
$\alpha$
and mass $m>0$ of the particle, assumed to carry unit charge.

As in the preceding section, we observe the realization of an
Ornstein--Uhlenbeck process,
but now driven by the physical Brownian motion and with covariance
matrix $M$:
%
%e8.2 #&#
\begin{eqnarray}
\label{equ:OULimtPBM} dX^{\alpha,m}_t = A X^{\alpha,m}_t
\,dt + M \,d\bar{W}^{\alpha,m}_t.
\end{eqnarray}

Now, it is quite easy to show (see, e.g.,
\cite{pavliotisStuart}, Section~11.7.7)
that $M \bar{W}^{\alpha,m} \to W$ in supremum norm, as $m\to0$.
In the one-dimensional case (where the MLE is continuous in supremum
norm, as we saw in Example \ref{Ex1}),
it automatically follows that
\begin{eqnarray*}
\hat A_T\bigl( X^{\alpha,m} \bigr) \mathop{\rightarrow}_{m \to0}
\hat A_T( X ),
\end{eqnarray*}
where $X$ solves the classical OU equation $dX = A X \,dt + dW$
and $\hat A_T$ is the MLE for the latter.
On the other hand, in dimension $d \ge2$, and in presence of a
magnetic field $\alpha\neq0$ we still have $M \bar{W}^{\alpha,m}
\to W$ but
the desired convergence \textit{fails}, that is,
\begin{eqnarray*}
\hat A_T\bigl( X^{\alpha,m} \bigr) \mathop{\nrightarrow}_{m \to0}
\hat A_T( X ).
\end{eqnarray*}
The underlying reason is failure of convergence at the level of
iterated integrals to the
Stratonovich iterated integrals of $W$. Instead, as was shown in detail
in \cite{FGL}, one has
\begin{eqnarray*}
\biggl(M \bar{W}^{\alpha,m}_s, \int_s^t
M \bar{W}^{\alpha,m}_{s,r} \otimes dM \bar{W}_r\biggr)
\to \biggl(W_s, \int_s^t
W_{s,r} \otimes\circ W_r + (t-s) D \biggr),
\end{eqnarray*}
with correction term
\begin{eqnarray*}
D := \frac{1}{2} \operatorname{Anti}[M] \operatorname{Sym}[M]^{-1}
= \frac{1}{2} \pmatrix{ 0 & \alpha
\vspace*{3pt}\cr
-\alpha& 0}.
\end{eqnarray*}
As an easy consequence,
\begin{eqnarray*}
\biggl(X^{\alpha,m}_s, \int_s^t
X^{\alpha,m}_{s,r} \otimes d X^{\alpha,m}_r\biggr)
\to \biggl(X_s, \int_s^t
X_{s,r} \otimes\circ X_r + (t-s) D \biggr),
\end{eqnarray*}
and so, by Theorem \ref{thm:mainTheorem}, we have a ``modified'' MLE
stability of the form
\begin{eqnarray*}
\hat{\mathbf{A}}_T\biggl( \biggl(X^{\alpha,m}_s,
\int_s^t X^{\alpha,m}_{s,r}
\otimes d X^{\alpha,m}_r - (t-s) D\biggr) \biggr) \to \hat{
\mathbf{A}}_T\biggl( \biggl(X_s, \int
_s^t X_{s,r} \otimes\circ
X_r\biggr) \biggr),
\end{eqnarray*}
where, with $\bar\imath:= 3 - i, \bar\jmath:= 3 - j$ and
$U(X) = \int_0^T (X^1_r)^2 \,dr \int_0^t (X^2_r)^2 \,dr -\break  (\int_0^T X^1_r X^2_r \,dr )^2$,
%
%e8.3 #&#
\begin{eqnarray}
\label{eq:roughMLE}
\hat{\mathbf{A}}^{i,j}_T (X, \mathbb{X}) &=&
\frac{1}{U(X)} \biggl( \int_0^T
\bigl(X^{\bar\jmath}_s\bigr)^2 \,ds \biggl\{ \mathbb
{X}^{i,j}_{0,T} - \delta_{i,j} \frac{T}{2}
\biggr\}
\nonumber\\[-8pt]\\[-8pt]\nonumber
&&{} - \int_0^T X^i_s
X^{\bar\imath}_s \,ds \biggl\{ \mathbb{X}^{\jmath,
i}_{0,T}
- \delta_{\jmath,i} \frac{T}{2} \biggr\} \biggr).
\end{eqnarray}

In summary, it is \textit{perfectly justified}, in the small mass regime
$m \ll1$, to consider the effective dynamics
$dX_t = A X_t \,dt + dW_t$ as approximation for \eqref{equ:OULimtPBM}.
However, it would be {\it wrong} to use the resulting MLE estimator on
the realizations of
realization of $X^{\alpha,m}$, even in the limit $m \to0$. Instead,
the estimation procedure based on $X^{\alpha,m}$
{\it must} take account of the correction term $D$ we exhibited above.
At last, we support our findings with concrete
numerical results, taking
\begin{eqnarray*}
A = \pmatrix{ -3 & 2
\vspace*{3pt}\cr
0 & -4},
\end{eqnarray*}
and $100$ Monte Carlo simulations.
The force of the magnetic field is chosen as $\alpha= 1.0$, the mass
of the particle as $m=0.01$
and discretization is done on a time grid of $10^5$ equidistant points.

%t3 #&#
\begin{table}[t]
\tabcolsep=0pt
\caption{Mean and standard deviation of $\hat A_T^{1,2}$ over $100$
Monte Carlo runs for the physical Brownian motion model. The correct
value is $2.0$}\label{tab:physical}
\begin{tabular*}{\tablewidth}{@{\extracolsep{\fill}}@{}lcccccccc@{}}
\hline
& \multicolumn{2}{c}{$\bolds{T=1.0}$} & \multicolumn{2}{c}{$\bolds{T=3.0}$} & \multicolumn{2}{c}{$\bolds{T=10.0}$}& \multicolumn{2}{c@{}}{$\bolds{T=30.0}$}\\[-6pt]
& \multicolumn{2}{c}{\hrulefill} & \multicolumn{2}{c}{\hrulefill} & \multicolumn{2}{c}{\hrulefill}& \multicolumn{2}{c@{}}{\hrulefill}\\
& \textbf{Mean} & \textbf{Std dev} & \textbf{Mean} & \textbf{Std dev} & \textbf{Mean} & \textbf{Std dev} & \textbf{Mean} & \textbf{Std dev}\\
\hline
w/correction & 2.0 & 0.84 & 2.0 & 0.72 & 2.1 & 0.59 & \phantom{$-$}2.2 & 0.45 \\
w/o correction & 1.7 & 0.79 & 1.1 & 0.64 & 0.3 & 0.49 & $-$0.4 & 0.35\\
\hline
\end{tabular*}
\end{table}

The results are shown in Table~\ref{tab:physical}.
As is clearly visible, the corrected estimator yields good results,
with decreasing standard variation for increasing time horizons.
The uncorrected estimator on the other hand yields useless results for
times larger then $3.0$.

%\begin{appendix}
%\section{}
%\end{appendix}

% zodis "Acknowledgments" paliekamas pagal autoriu
%\section*{Acknowledgments}

%\begin{supplement}[id=suppA]
%\sname{Supplement A}
%\stitle{}
%\slink[doi]{10.1214/00-AAPXXXXSUPP} %[doi,text={...}] - jei reikia
%suskaldyti doi
%\sdatatype{.pdf}
%\sfilename{aapXXXX\_supp.pdf}
%\sdescription{}
%\end{supplement}

% imsref loaded by linak, 2015-10-09 16:16:22
%

\printaddresses
\end{document}